\definecolor{green}{rgb}{0,0.8,0} 
\newtheorem{theorem}{Theorem}[section]
\newtheorem{lemma}[theorem]{Lemma}
\newtheorem{proposition}[theorem]{Proposition}
\theoremstyle{definition}
\theoremstyle{remark}
\newtheorem{remark}[theorem]{Remark}
\numberwithin{equation}{section}
\newcommand{\nnrm}[1]{{\vert\kern-0.25ex\vert\kern-0.25ex\vert #1 
		\vert\kern-0.25ex\vert\kern-0.25ex\vert}}
\newcommand{\rd}{\partial}
\newcommand{\nb}{\nabla}
\newcommand{\calC}{\mathcal C}
\newcommand{\R}{\mathbb{R}}
\newcommand{\bbn}{\mathbb N}
\newcommand{\bq}{\begin{equation}}
\newcommand{\eq}{\end{equation}}
\newcommand{\e}{\varepsilon}
\newcommand{\lt}{\left}
\newcommand{\rt}{\right}
\newcommand{\pa}{\partial}
\newcommand{\intr}{\int_{\R^3}}
\newcommand{\intrr}{\iint_{\R^3 \times \R^3}}
\begin{document}
\bibliographystyle{plain}
 \title{Global-in-time existence of weak solutions for Vlasov--Manev--Fokker--Planck system}
\author{Young-Pil Choi\thanks{Department of Mathematics, Yonsei University, Seoul 03722, Republic of Korea. E-mail: ypchoi@yonsei.ac.kr} \and In-Jee Jeong\thanks{Department of Mathematics and RIM, Seoul National University, Seoul 08826. E-mail: injee\_j@snu.ac.kr}   }

\date\today
 
  \maketitle

\renewcommand{\thefootnote}{\fnsymbol{footnote}}
\footnotetext{\emph{Key words: Vlasov--Fokker--Planck equation, Manev potential, global existence of weak solutions, averaging lemma}  \\
\emph{2020 AMS Mathematics Subject Classification:} 82C40, 35Q70
}
\renewcommand{\thefootnote}{\arabic{footnote}}

\begin{abstract} 
We consider the Vlasov--Manev--Fokker--Planck (VMFP) system in three dimensions, which differs from the Vlasov--Poisson--Fokker--Planck in that it has the gravitational potential of the form $-1/r - 1/r^2$ instead of the Newtonian one. For the VMFP system, we establish the global-in-time existence of weak solutions. The proof extends to several related kinetic systems. 
\end{abstract}

\section{Introduction}
In this paper, we establish the global-in-time existence of weak solutions to the gravitational \textit{Vlasov--Manev--Fokker--Planck} (VMFP) system:
\begin{equation} \label{eq:VR}
\left\{
\begin{aligned}
&\rd_t f + v\cdot\nb_x f +   \nb K\star \rho \cdot \nb_v f = \sigma\nabla_v \cdot ( \nabla_v f + vf), \quad (x,v) \in \R^3 \times \R^3, \quad t > 0,\\
& \rho = \intr f \,dv,
\end{aligned}
\right.
\end{equation}
where the interaction potential $K:\R^3 \to \R$ is given by 
\bq\label{Manev}
K(x) = K_{\rm M}(x) + K_{\rm C}(x) := \frac{C_{\rm M}}{|x|^2} + \frac{C_{\rm C}}{|x|}
\eq
with some positive constants $C_{\rm M}$ and $C_{\rm C}$. Here $K_{\rm M}$ is called the {\it pure Manev} potential and $K_{\rm C}$ is the attractive Coulomb potential. Note that the force field $K \star \rho$ can be rewritten as a linear combination of $(-\Delta)^{-1} \rho$ and $(-\Delta)^{-\frac12} \rho$. The above form of $K$ has been proposed by Manev in \cite{Man1, Man2, Man4, Man3} as a modification of the Newtonian gravitational law, to explain various phenomena observed in planetary systems. We refer to the works \cite{CJpre,BDIV97,IVDB98} for an extensive list of literature on this modification of Newtonian gravitational law. As we shall demonstrate below, the Manev potential is ``critical'' in several ways and raises very interesting mathematical challenges. Already at this point, let us note that $\nb K_M \star \rho$ has the same regularity in $x$ with $\rho$ itself. 

In the absence of the pure Manev potential and the linear Fokker--Planck term, i.e. $C_{\rm M} = 0$ and $\sigma=0$, the VMFP system \eqref{eq:VR} reduces to the well-known gravitational Vlasov--Poisson system, which has been extensively studied. The global-in-time existence of weak sand classical solutions are shown in \cite{Ars75, BD85, HH84, LP91, Pal12, Pfa92, UO78}. For the Vlasov--Poisson--Fokker--Planck system, the system \eqref{eq:VR} with $C_{\rm M} = 0$, the global existence of weak solutions, long time asymptotics, and classical solutions near Maxwellian are also investigated in \cite{BD95, Bou93,  CCJ21, CCS19, CS95, CSV95, Degond86, HH84, HJ13, Vic91, VO90}. 

Despite these fruitful developments on the existence theory for the Vlasov--Poisson system or Vlasov--Poisson--Fokker--Planck system, due to the difficulties created by the strong singularity in the interaction potential, there is limited literature on the existence theory for the case with the Manev potential. The local-in-time well-posedness for the Vlasov--Manev system, i.e. \eqref{eq:VR} with $\sigma=0$, is established in \cite{IVDB98} and the finite-time singularity formation for the system \eqref{eq:VR} in the case $C_{\rm C} =0$ and $\sigma = 0$ is given in  \cite{BDIV97}. Proving singularity formation in the Manev case ($C_{\rm C}, C_{\rm M} >0$) seems to be a challenging open problem. In our recent work \cite{CJpre}, we obtained the local-in-time well-posedness theory in the case of potentials even more singular than that of Manev, namely $K$ of the form $|x|^{-\alpha}$ with $\alpha \geq d-1$ where $d$ is the dimension of the space. In the three dimensional case $d=3$, this local  well-posedness theory covers the regime $\alpha \in [2, \frac94)$. Furthermore, the finite-time singularity formation result in \cite{CJpre} covers the case $\sigma > 0$ when $\alpha > 2$. To the best of our knowledge, the only existing existence theory for the Vlasov--Manev-type system is local in time \cite{BDIV97, CJpre}. In the current work, we obtain for the first time global solutions of the VMFP system \eqref{eq:VR}. 
 
\begin{theorem}\label{main_thm} Suppose that the initial data $f_0$ satisfies 
\[
f_0 \in (L^1_+ \cap L^\infty)(\R^3 \times \R^3), \quad (|x|^2 + |v|^2 + {\bf 1}_{\sigma >0}|\log f_0|)f_0 \in L^1(\R^3 \times \R^3),
\] 
\[
(K\star\rho_0)f_0 \in L^1(\R^3 \times \R^3), 
\] and \begin{equation*}
	\begin{split}
		C_{\rm M}\|f_0\|_{L^1}^{\frac13} < \varepsilon_0
	\end{split}
\end{equation*} 
where $\varepsilon_0>0$ is a small absolute constant. Then, for any $\sigma, C_C\ge0$, there exists a global weak solution of the equation \eqref{eq:VR}--\eqref{Manev} satisfying
\[
f \in \calC([0,T];L^1(\R^3 \times \R^3)) \cap L^\infty(\R^3 \times \R^3 \times (0,T)) \quad \] and
\[ 
(K \star \rho) f \in L^\infty(0,T;L^1(\R^3 \times \R^3))
\] for all $T>0$, with initial data $f_0$. 
\end{theorem}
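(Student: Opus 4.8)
The plan is to construct $f$ as a weak-$*$ limit of solutions of a regularized system, the smallness hypothesis entering only through a uniform energy estimate that controls the scaling-critical Manev contribution, and a velocity averaging lemma supplying the compactness needed to pass to the limit in the quadratic term $\nb K\star\rho\cdot\nb_v f$. Concretely, I would first replace $K$ by a mollified kernel $K^\eta=K\star\varphi_\eta$ (smooth, with bounded gradient) and regularize $f_0$, so that the regularized VMFP system is classically solvable globally in time. The solution $f^\eta\ge0$ conserves mass, $\|f^\eta(t)\|_{L^1}=\|f_0\|_{L^1}=:M$, and — after the change of variables $g(t,x,v)=e^{3\sigma t}f^\eta(t,x,e^{\sigma t}v)$, which turns the Fokker--Planck operator into a time-rescaled heat operator — satisfies the maximum principle $\|f^\eta(t)\|_{L^\infty}\le e^{3\sigma t}\|f_0\|_{L^\infty}$; the spatial moment is tied to the kinetic energy through $\frac{\ud}{\ud t}\int|x|^2f^\eta=2\int x\cdot v\,f^\eta\le\int(|x|^2+|v|^2)f^\eta$ and Gr\"onwall.

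\emph{The energy estimate.} The heart of the argument is the free energy
\[
\calF^\eta(t)=\int f^\eta\log f^\eta\,\ud x\,\ud v+\tfrac12\int|v|^2f^\eta\,\ud x\,\ud v-\tfrac12\int(K^\eta\star\rho^\eta)\rho^\eta\,\ud x,
\]
which is nonincreasing: the transport and force contributions cancel via the continuity equation $\rd_t\rho^\eta+\nb_x\cdot j^\eta=0$, while the Fokker--Planck terms combine into the nonnegative dissipation $\sigma\int|\nb_vf^\eta+vf^\eta|^2/f^\eta$ (for $\sigma=0$ this is just conservation of $\tfrac12\int|v|^2f^\eta-\tfrac12\int(K^\eta\star\rho^\eta)\rho^\eta$). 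Since $\calF^\eta(0)$ is finite by hypothesis (note $\int(K\star\rho_0)\rho_0=\|(K\star\rho_0)f_0\|_{L^1}$ as $K\ge0$), bounding the entropy from below by a small multiple of $\int(|x|^2+|v|^2)f^\eta$ plus a constant reduces matters to controlling the potential energy. Write $K=K_{\rm M}+K_{\rm C}$. Hardy--Littlewood--Sobolev gives $\iint|x-y|^{-1}\rho^\eta(x)\rho^\eta(y)\lesssim\|\rho^\eta\|_{L^{6/5}}^2$, which after interpolating $\rho^\eta$ between $L^1$ and $L^{5/3}$ and using $\|\rho^\eta\|_{L^{5/3}}^{5/3}\lesssim\|f^\eta\|_{L^\infty}^{2/3}\int|v|^2f^\eta$ is of strictly sublinear order in the kinetic energy, hence absorbable by Young. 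For the critical term,
\[
\iint|x-y|^{-2}\rho^\eta(x)\rho^\eta(y)\lesssim\|\rho^\eta\|_{L^{3/2}}^2\lesssim M^{1/3}\|f^\eta\|_{L^\infty}^{2/3}\int|v|^2f^\eta\,\ud x\,\ud v,
\]
which is exactly linear in $\int|v|^2f^\eta$; the smallness $C_{\rm M}M^{1/3}<\varepsilon_0$ makes its coefficient less than $\tfrac12$ (with the $\|f^\eta\|_{L^\infty}$-factor accounted for), so it is absorbed into $\tfrac12\int|v|^2f^\eta$. One concludes bounds, uniform in $\eta$, for $\int(|x|^2+|v|^2)f^\eta$, for $\int f^\eta|\log f^\eta|$, and for $\int(K^\eta\star\rho^\eta)\rho^\eta$; consequently $\rho^\eta$ is bounded in $L^\infty(0,T;L^1\cap L^{5/3})$, $j^\eta$ in $L^\infty(0,T;L^1\cap L^{q})$ for some $q>1$, and the force $\nb K^\eta\star\rho^\eta$ in $L^\infty(0,T;L^{5/3})$ — the Manev part being a Riesz transform, bounded on every $L^p$ with $1<p<\infty$.

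\emph{Compactness and limit.} By these bounds $f^\eta\rightharpoonup f$ weak-$*$ in $L^\infty((0,T)\times\R^3\times\R^3)\cap L^\infty(0,T;L^1)$. Rewrite the equation as $\rd_tf^\eta+v\cdot\nb_xf^\eta=\nb_v\cdot G^\eta$ with $G^\eta=\sigma(\nb_vf^\eta+vf^\eta)-(\nb K^\eta\star\rho^\eta)f^\eta$: the dissipation bound plus $f^\eta\in L^\infty$ controls $\nb_vf^\eta$ in $L^2_{\rm loc}$, and the force bound controls $(\nb K^\eta\star\rho^\eta)f^\eta$ in $L^p_{\rm loc}$ for some $p>1$, so $G^\eta$ is bounded in $L^p_{\rm loc}((0,T)\times\R^3\times\R^3)$. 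A velocity averaging lemma then gives strong $L^p_{\rm loc}$-compactness of the averages $\int f^\eta\psi(v)\,\ud v$, $\psi\in C_c^\infty$; together with the moment bounds controlling the large-$v$ tails, this upgrades to strong $L^p_{\rm loc}((0,T)\times\R^3)$ convergence of $\rho^\eta$ and $j^\eta$ for $1\le p<5/3$. Continuity of the Riesz transform on $L^p$ then yields $\nb K^\eta\star\rho^\eta\to\nb K\star\rho$ in $L^p_{\rm loc}$, so $(\nb K^\eta\star\rho^\eta)f^\eta\rightharpoonup(\nb K\star\rho)f$ (strong times weak) and one passes to the limit in every term of the weak formulation. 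Finally $f\in\calC([0,T];L^1)$ follows from the equation (bounding $\rd_tf$ in a negative-order space) together with the equi-integrability furnished by the moment and entropy bounds, and $(K\star\rho)f\in L^\infty(0,T;L^1)$ follows from the uniform potential-energy bound by weak lower semicontinuity.

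\emph{Main obstacle.} The delicate step is the energy estimate: the Manev potential energy $\iint|x-y|^{-2}\rho\rho$ is scaling-critical — comparable to $\|f\|_{L^1}^{1/3}\|f\|_{L^\infty}^{2/3}$ times the kinetic energy — so only the smallness hypothesis can render it absorbable, and the precise way in which $\|f^\eta\|_{L^\infty}$ enters this estimate, and hence the exact form of the condition $C_{\rm M}\|f_0\|_{L^1}^{1/3}<\varepsilon_0$, is what requires care. A secondary technical point is securing enough integrability of the singular term $(\nb K_{\rm M}\star\rho^\eta)f^\eta$ to legitimately apply the averaging lemma; once $\rho^\eta$ converges in $L^p$ with $1<p<\infty$, passing through the Riesz transform is immediate.
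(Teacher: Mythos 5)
Your proposal follows essentially the same route as the paper: regularize the kernel and initial data, derive a uniform free-energy estimate in which the critical Manev contribution $\iint|x-y|^{-2}\rho\rho\lesssim\|\rho\|_{L^1}^{1/3}\|\rho\|_{L^{5/3}}^{5/3}\lesssim C_{\rm M}\|f_0\|_{L^1}^{1/3}\|f\|_{L^\infty}^{2/3}\int|v|^2 f$ is absorbed using the smallness hypothesis, and pass to the limit by combining the velocity averaging lemma with Calder\'on--Zygmund for $\nabla K_{\rm M}$ and Hardy--Littlewood--Sobolev for $\nabla K_{\rm C}$. The only cosmetic differences are your choice of a mollified kernel $K\star\varphi_\eta$ (the paper uses the explicit family $K^\e(x)=C_{\rm M}/(\e+|x|^2)+C_{\rm C}/(\e+|x|^2)^{1/2}$, which conveniently satisfies $K^\e\le K$ pointwise) and your derivation of the $L^\infty$ bound via a change of variables rather than the paper's $L^p$ energy estimate.
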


\begin{remark} As stated above, our existence theory covers the case $\sigma = 0$, and in this case the assumption on the entropy $f_0 \log f_0$ is not required.
\end{remark}

\begin{remark} Let us give some comments on the smallness assumption on $C_{\rm M}\|f_0\|_{L^1}^{\frac13}$.
\begin{itemize}
\item[(i)] As mentioned above, the nonglobal existence of smooth solutions to the Vlasov--(pure)Manev system, i.e. \eqref{eq:VR} with $C_{\rm C} =0$ and $\sigma = 0$ is shown in \cite{BDIV97} (see also \cite{CJpre}) under certain assumptions on the initial data. More precisely, if the initial data $f_0$ satisfies
\bq\label{blow_asp}
\intrr |v|^2 f_0(x,v)\,dxdv <  \intr (K_M \star \rho_0) \rho_0\,dx = C_{\rm M} \intrr \frac{1}{|x-y|^2}\rho_0(x) \rho_0(y)\,dxdy,
\eq
then the solution $f$ to the system \eqref{eq:VR} with $C_{\rm C} =0$ and $\sigma = 0$ cannot exist globally in time. The mechanism of blow-up is strong attraction between particles, which suggests that gravitational collapse occurs in finite time (the actual proof is based on a contradiction argument, though). This makes it likely that even weak solutions do not exist after blowup of smooth solutions. On the other hand, one may check that smallness assumption on the initial mass makes it impossible to satisfy \eqref{blow_asp}. In this regard, the smallness assumption $C_{\rm M}\|f_0\|_{L^1}^{\frac13}$ is necessary. 
\item[(ii)] In several respects, the behavior of Vlasov--Manev system in $d=3$ resembles that of attractive Vlasov--Poisson system in $d=4$. In the latter, it is well known that smooth solutions blow up for ``large'' data but remains globally bounded for ``small'' data, see \cite{Ho1,Ho2,HH84}. 
\item[(iii)] If the singularity of the kernel is mildly singular relative to the pure Manev case, then the smallness assumption can be removed. More precisely, if we consider the interaction potential $K = K(x)$ given by
 \bq\label{k_subM}
 K(x) = \frac1{|x|^\alpha} \quad \mbox{with } \alpha \in (0,2),
 \eq
 then we have the global-in-time existence of weak solution in Theorem \ref{main_thm}  without any smallness assumptions on the initial data. For more details, we refer to Remark \ref{rmk_mild}. We also would like to mention that if $K$ is given by \eqref{k_subM}, the  argument for singularity formation in \cite{BDIV97} cannot be applied. In this regard, the Manev potential case is critical. 
\item[(iv)] If we consider the repulsive interaction potential, for instance, in the case where $K$ is replaced with 
\bq\label{k_rep}
K_{\rm rep}(x) = -\frac1{|x|^\alpha} \quad \mbox{with } \alpha \in (0,2],
\eq
then the global-in-time existence of weak solution for such system can be obtained without any smallness assumptions on the initial data. See Remark \ref{rmk_rep} for details. 
\item[(v)] In the two dimensional case, the pure Manev potential is given by $K(x) = |x|^{-1}$. In this case, any smallness assumption on the initial data is not required for the global existence of weak solutions. We give some details in Remark \ref{rmk_2D}. Obviously, the same result holds for the repulsive interaction case, i.e., $K(x) = -|x|^{-1}$, in two dimensions.
\end{itemize}
\end{remark}


The proof of Theorem \ref{main_thm} is given in the next section. The main difficulty is the singular and attractive interactional potential $K_{\rm M}$. We first mollify the singular interaction potential $K$ by introducing regularization parameter $\e$ and consider a solution $f^{(\e)}$ to that regularized system. We then show some uniform-in-$\e$ estimates on the kinetic energy, in which the smallness assumption on $C_{\rm M}\|f_0\|_{L^1}^{\frac13}$ is required (Proposition \ref{prop_reg}). In order to pass to the limit $\e \to 0$, we use the velocity averaging lemma (Lemma \ref{lem_va}) to have strong compactness. This together with the Calder\'on--Zygmund lemma and Hardy--Littlewood--Sobolev inequality, we show that the limit function $f:= \lim_{\e \to 0} f^{(\e)}$ satisfies the system \eqref{eq:VR} in the sense of Theorem \ref{main_thm}.

 %
 %
 %
 %
 %
 %
\section{Global existence of weak solutions}

\subsection{Regularized VMFP system}

For the existence of weak solutions to \eqref{eq:VR}, we first regularize the system as follows: 
\begin{align}\label{eq:reg}
\begin{aligned}
&\pa_t f^{(\e)} + v\cdot\nabla_x f^{(\e)}  +  \nabla K^\e \star \rho^{(\e)}\cdot \nabla_v f^{(\e)}   = \sigma\nabla_v \cdot ( \nabla_v f^{(\e)} + vf^{(\e)}),
\end{aligned}
\end{align}
subject to the initial data
\[
f_0^{(\e)} = f^{(\e)}(x,v,0) := f_0(x,v)  \mathbf{1}_{\{|v|\le 1/\e\}},
\]
where 
\[
\rho^{(\e)} := \int_{\R^3} f^{(\e)} \,dv, \quad \rho^{(\e)} u^{(\e)} := \int_{\R^3} vf^{(\e)} \,dv, 
\]
and $K^\e = K^\e(x)$ is given as
\[
K^\e(x) = K^\e_{\rm M}(x) + K^\e_{\rm C}(x):= \frac{C_{\rm M}}{(\e + |x|^2)} + \frac{C_{\rm C}}{(\e + |x|^2)^{\frac12}}.
\]
Note that $K^\e_{\rm M}(x) \leq K_{\rm M}(x)$ and $K^\e_{\rm C}(x) \leq K_{\rm C}(x)$ for all $x \in \R^3$, thus the regularized potential $K^\e(x)$ satisfies $K^\e(x) \leq K(x)$ for all $x\in \R^3$.

We notice that the global-in-time existence of weak solutions to the regularized system \eqref{eq:reg} follows by the standard existence theory for kinetic equations since the force field $\nabla K^\e \star \rho^{(\e)}$ is bounded and Lipschitz continuous.

\subsection{Uniform-in-$\e$ estimates}\label{ssec:uniform}

We begin with the $L^\infty$ bound estimate.
\begin{lemma}\label{lem_inf} Let $T>0$, $p \in [1,\infty]$, and $f^{(\e)}$ be the weak solution to \eqref{eq:reg} on the interval $[0,T]$. Then we have
\[
\sup_{0 \leq t \leq T}\|f^{(\e)}(\cdot,\cdot,t)\|_{L^p} \le \|f_0^{(\e)}\|_{L^p}e^{3\sigma(1-1/p) T} \leq \|f_0\|_{L^p}e^{3\sigma(1-1/p) T} .
\]
\end{lemma}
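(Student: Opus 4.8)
The plan is to perform the standard $L^p$ energy estimate for a kinetic Fokker--Planck equation, using crucially that the regularized force field $\nabla K^\e\star\rho^{(\e)}$ depends on $x$ only. Since this force field is bounded and Lipschitz, the weak solution $f^{(\e)}$ is in fact a smooth, rapidly ($v$-)decaying, nonnegative function (nonnegativity by the maximum principle), so the computation below is legitimate; alternatively one works with a smooth truncation of $p(f^{(\e)})^{p-1}$ and passes to the limit, which is where the only genuine (but routine) care is needed.

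Fix $p\in[1,\infty)$ and write $f=f^{(\e)}\ge 0$. Multiplying \eqref{eq:reg} by $pf^{p-1}$ and integrating over $\R^3\times\R^3$, the transport terms vanish: the free-streaming term contributes $\int v\cdot\nabla_x(f^p)\,dxdv=0$ since $v$ is independent of $x$, and the force term contributes $\int (\nabla K^\e\star\rho^{(\e)})\cdot\nabla_v(f^p)\,dxdv=0$ since $\nabla K^\e\star\rho^{(\e)}$ is independent of $v$. For the dissipative term, integration by parts in $v$ gives
\[
\int_{\R^3\times\R^3} pf^{p-1}\,\sigma\nabla_v\cdot(\nabla_v f+vf)\,dxdv = -\sigma p(p-1)\int_{\R^3\times\R^3} f^{p-2}|\nabla_v f|^2\,dxdv - \sigma(p-1)\int_{\R^3\times\R^3} v\cdot\nabla_v(f^p)\,dxdv,
\]
and the last integral equals $-3\int_{\R^3\times\R^3} f^p\,dxdv$ because $\nabla_v\cdot v=3$. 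Discarding the nonpositive dissipation term, we obtain the differential inequality $\frac{d}{dt}\|f(t)\|_{L^p}^p \le 3\sigma(p-1)\|f(t)\|_{L^p}^p$, so Grönwall yields $\|f(t)\|_{L^p}\le\|f_0^{(\e)}\|_{L^p}\,e^{3\sigma(p-1)t/p}=\|f_0^{(\e)}\|_{L^p}\,e^{3\sigma(1-1/p)t}$. The case $p=\infty$ follows by letting $p\to\infty$ (the constant $e^{3\sigma(1-1/p)T}$ tending to $e^{3\sigma T}$), or directly from the $L^p$ bounds since $\|f(t)\|_{L^\infty}=\lim_{p\to\infty}\|f(t)\|_{L^p}$ on any bounded set and $f$ has compact-ish decay.

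Finally, the second inequality is immediate: since $f_0\ge0$, we have $0\le f_0^{(\e)}=f_0\mathbf{1}_{\{|v|\le1/\e\}}\le f_0$ pointwise, hence $\|f_0^{(\e)}\|_{L^p}\le\|f_0\|_{L^p}$ for every $p$. The only step requiring attention is justifying the integration by parts in $v$ — i.e., sufficient decay of $f$ and $\nabla_v f$ at infinity — which is supplied by the regularity coming from the bounded Lipschitz force field (or handled by a standard $\sqrt{f^2+\delta}$-type regularization and truncation argument); the rest is routine.
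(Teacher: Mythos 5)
Your proof is correct and follows essentially the same route as the paper: multiply by $pf^{p-1}$, observe both transport terms vanish (the force field is $v$-independent, so $\nabla_v\cdot(\nabla K^\e\star\rho^{(\e)})=0$), integrate the Fokker--Planck operator by parts to produce a nonpositive dissipation term plus $3\sigma(p-1)\|f\|_{L^p}^p$, and close with Grönwall. The paper keeps the dissipation term in a Duhamel-type identity and leaves the $p=\infty$ case implicit, whereas you discard the dissipation and spell out the $p\to\infty$ limit and the monotonicity $\|f_0^{(\e)}\|_{L^p}\le\|f_0\|_{L^p}$; these are cosmetic differences, not a different method.
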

\begin{proof}
We estimate
\begin{align*}
\frac{d}{dt}\intrr (f^{(\e)})^p\,dxdv 
&= - (p-1) \intrr (f^{(\e)})^p \nabla_v \cdot \lt(   \nabla K^\e \star \rho^{(\e)}  \rt) dxdv\cr
&\quad - \sigma p(p-1) \intrr (f^{(\e)})^{p-2} |\nabla_v f^{(\e)}|^2\,dxdv +3\sigma(p-1)\intrr (f^{(\e)})^p\,dxdv \cr
&= - \frac{4\sigma (p-1)}{p}\intrr |\nabla_v (f^{(\e)})^{p/2}|^2\,dxdv +3\sigma(p-1)\intrr (f^{(\e)})^p\,dxdv
\end{align*}
for $p \in [1,\infty)$. This implies
\[
\|f^{(\e)}(\cdot,\cdot,t)\|_{L^p}^p + \frac{4\sigma(p-1)}{p}\int_0^t  \|\nabla_v (f^{(\e)})^{p/2}(\cdot,\cdot,s)\|_{L^2}^2\,ds = \|f^{(\e)}_0\|_{L^p}^p e^{3\sigma(p-1)t}.
\]
Hence we have the desired result.
\end{proof}

We next provide an auxiliary lemma showing some relationship between the local density and the kinetic energy. This lemma will be used to estimate the interaction energy. 
\begin{lemma}\label{lem_tech} Suppose $f \in L^1 \cap L^\infty(\R^3 \times \R^3)$ and $|v|^2 f \in L^1(\R^3 \times \R^3)$. Then there exists a constant $C>0$ such that
\[
\|\rho\|_{L^\frac53} \leq (C\|f\|_{L^\infty} + 1)\lt(\intrr |v|^2 f\,dxdv\rt)^{\frac35}.
\]
In particular, we have
\begin{align*}
\|\rho\|_{L^p} & \leq (C\|f\|_{L^\infty}^\alpha + 1)\lt(\intrr |v|^2 f\,dxdv\rt)^{\frac35 \alpha}\|\rho\|_{L^1}^\alpha \quad \mbox{for all} \quad p \in \lt[1,\, \frac53\rt],
\end{align*}
where $\rho = \intr f\,dv$ and $\alpha = \frac52 \lt(1 - \frac1p \rt)$. 
\end{lemma}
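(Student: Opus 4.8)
The plan is a two-step moment-interpolation argument. First I would prove the endpoint bound $\|\rho\|_{L^{5/3}}\lesssim(1+\|f\|_{L^\infty})(\intrr|v|^2f\,dxdv)^{3/5}$ by a pointwise-in-$x$ splitting of the velocity integral, and then obtain the whole range $p\in[1,\tfrac53]$ by Lyapunov (Hölder) interpolation between $L^1$ and $L^{5/3}$.

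\emph{Step 1: the $L^{5/3}$ bound.} Write $m(x):=\intr|v|^2f(x,v)\,dv$, so that $\intr m(x)\,dx=\intrr|v|^2f\,dxdv$ by Fubini and $m(x)<\infty$ for a.e.\ $x$. Fix such an $x$ with $m(x)>0$ (if $m(x)=0$ then $f(x,\cdot)=0$ a.e.\ and $\rho(x)=0$, so nothing to prove). For any $R>0$,
\[
\rho(x)=\int_{|v|\le R}f(x,v)\,dv+\int_{|v|>R}f(x,v)\,dv\le\frac{4\pi}{3}\,\|f\|_{L^\infty}R^3+\frac{1}{R^2}\,m(x),
\]
using $1\le|v|^2/R^2$ on $\{|v|>R\}$ in the second term. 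Choosing $R=\bigl(m(x)/(1+\|f\|_{L^\infty})\bigr)^{1/5}$ makes both terms of size $(1+\|f\|_{L^\infty})^{2/5}m(x)^{3/5}$, giving the pointwise estimate $\rho(x)\le C(1+\|f\|_{L^\infty})^{2/5}m(x)^{3/5}$ with $C=\tfrac{4\pi}{3}+1$. Raising to the power $\tfrac53$ and integrating in $x$,
\[
\|\rho\|_{L^{5/3}}^{5/3}\le C^{5/3}(1+\|f\|_{L^\infty})^{2/3}\intr m(x)\,dx=C^{5/3}(1+\|f\|_{L^\infty})^{2/3}\intrr|v|^2f\,dxdv,
\]
and taking the $\tfrac35$ power yields $\|\rho\|_{L^{5/3}}\le C(1+\|f\|_{L^\infty})^{2/5}(\intrr|v|^2f\,dxdv)^{3/5}$. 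Since the exponent $\tfrac25$ is strictly below $1$, the prefactor is dominated by $C\|f\|_{L^\infty}+1$ after enlarging $C$ (the case $f\equiv0$ being trivial), which is the stated form.

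\emph{Step 2: interpolation to $p\in[1,\tfrac53]$.} For such $p$ set $\alpha=\tfrac52(1-\tfrac1p)\in[0,1]$, so that $\tfrac1p=(1-\alpha)+\tfrac35\alpha$, i.e.\ $\tfrac1p=\tfrac{1-\alpha}{1}+\tfrac{\alpha}{5/3}$. Lyapunov's inequality (here using $f\in L^1$, so $\rho\in L^1$) gives $\|\rho\|_{L^p}\le\|\rho\|_{L^1}^{1-\alpha}\|\rho\|_{L^{5/3}}^{\alpha}$, and inserting the Step 1 bound together with $(a+b)^\alpha\le a^\alpha+b^\alpha$ (valid for $0\le\alpha\le1$) gives
\[
\|\rho\|_{L^p}\le\bigl(C\|f\|_{L^\infty}^\alpha+1\bigr)\Bigl(\intrr|v|^2f\,dxdv\Bigr)^{\frac35\alpha}\|\rho\|_{L^1}^{1-\alpha},
\]
which is the claimed inequality (with $\|\rho\|_{L^1}$ entering to the power $1-\alpha$, the weight dual to the $L^{5/3}$ one; at $p=1$, $\alpha=0$, this correctly reduces to $\|\rho\|_{L^1}\le(C+1)\|\rho\|_{L^1}$).

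\emph{On the main obstacle.} There is no genuine difficulty — this is a classical kinetic moment estimate. The only points deserving care are the near-optimal choice of the truncation radius $R=R(x)$ in Step 1, which is precisely what forces the exponents $\tfrac25,\tfrac35$ and hence the Lebesgue exponent $\tfrac53$ after integration, and the bookkeeping that makes every constant depend on $f$ only through $\|f\|_{L^\infty}$ — exactly the feature needed when this lemma is later applied to the $\e$-uniform interaction-energy estimate.
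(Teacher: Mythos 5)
Your proof is correct and follows essentially the same approach as the paper: split the $v$-integral at a radius $R$, optimize over $R$ pointwise in $x$ to get $\rho(x)\lesssim m(x)^{3/5}$, integrate, and then interpolate. The paper simply chooses $R=m(x)^{1/5}$, which gives the prefactor $(C\|f\|_{L^\infty}+1)$ directly without your extra step of converting $(1+\|f\|_{L^\infty})^{2/5}$ into the stated form; that is a cosmetic difference. One useful thing you noticed: the correct exponent on $\|\rho\|_{L^1}$ coming from Lyapunov interpolation between $L^1$ and $L^{5/3}$ is $1-\alpha$ (with $\alpha=\tfrac52(1-\tfrac1p)$), whereas the paper's statement writes $\|\rho\|_{L^1}^{\alpha}$; your derivation (and the sanity check at $p=1$ and $p=5/3$) shows this is a typo in the lemma statement, and the version with $\|\rho\|_{L^1}^{1-\alpha}$ is the one actually used downstream (where $\rho=\rho^{(\e)}$ and $\|\rho^{(\e)}\|_{L^1}=\|\rho_0\|_{L^1}$ anyway).
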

\begin{proof} Note that for any $R>0$
\[
\rho = \intr f\,dv = \lt(\int_{|v| \geq R} + \int_{|v| \leq R} \rt) f\,dv \leq \frac1{R^2} \intr |v|^2 f\,dv + C\|f\|_{L^\infty} R^3.
\]
We now take $R = \lt(\intr |v|^2 f\,dv \rt)^{1/5}$ to obtain
\[
\rho \leq (C\|f\|_{L^\infty} + 1)\lt(\intr |v|^2 f\,dv \rt)^\frac35.
\]
This implies
\[
\|\rho\|_{L^\frac53} \leq (C\|f\|_{L^\infty} + 1)\lt(\intrr |v|^2 f\,dxdv\rt)^{\frac35}.
\]
Since $\rho \in L^1(\R^3)$, the $L^p$ interpolation inequality gives the desired result.
\end{proof}
We then show the bound estimate on the interaction energy in the lemma below.
\begin{lemma}\label{lem_tech2} Let $T>0$ and $f^{(\e)}$ be the weak solution to \eqref{eq:reg} on the interval $[0,T]$. Then we have
\[
\lt| \intrr K^\e(x-y)\rho^{(\e)} (x) \rho^{(\e)} (y) \,dxdy\rt| \leq CC_{\rm M}\|\rho_0\|_{L^1}^{\frac13}\|\rho^{(\e)}\|_{L^\frac53}^{\frac53} + CC_{\rm C}\|\rho_0\|_{L^1}^{\frac76}\|\rho^{(\e)}\|_{L^\frac53}^{\frac56}
\]
where $C>0$ is independent of $\e>0$.
\end{lemma}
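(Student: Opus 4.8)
The plan is to split the interaction energy into the Manev part and the Coulomb part and estimate each by a Hardy--Littlewood--Sobolev (HLS) inequality, then convert the resulting $L^p$ norms of $\rho^{(\e)}$ into the form asserted in the statement using the $L^1$-conservation of mass and $L^p$ interpolation. Since $K^\e_{\rm M}(x) \le K_{\rm M}(x) = C_{\rm M}/|x|^2$ and $K^\e_{\rm C}(x) \le K_{\rm C}(x) = C_{\rm C}/|x|$ pointwise, and since $\rho^{(\e)} \ge 0$, it suffices to bound
\[
\intrr \frac{C_{\rm M}}{|x-y|^2}\rho^{(\e)}(x)\rho^{(\e)}(y)\,dxdy \quad\text{and}\quad \intrr \frac{C_{\rm C}}{|x-y|}\rho^{(\e)}(x)\rho^{(\e)}(y)\,dxdy.
\]
For the Manev term, HLS in $\R^3$ with the kernel $|x|^{-2}$ (i.e. $\lambda = 2$, so $1/p + 1/q + \lambda/3 = 2$ forces $p = q = 6/5$) gives a bound by $C C_{\rm M}\|\rho^{(\e)}\|_{L^{6/5}}^2$. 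For the Coulomb term, HLS with $|x|^{-1}$ ($\lambda=1$, so $p = q = 6/5$ again) gives $C C_{\rm C}\|\rho^{(\e)}\|_{L^{6/5}}^2$. So after applying HLS both contributions are controlled by powers of $\|\rho^{(\e)}\|_{L^{6/5}}$.

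Next I would interpolate: $6/5 \in [1,5/3]$, so write $\|\rho^{(\e)}\|_{L^{6/5}} \le \|\rho^{(\e)}\|_{L^1}^{1-\theta}\|\rho^{(\e)}\|_{L^{5/3}}^{\theta}$ with $\theta$ determined by $\tfrac{5}{6} = (1-\theta) + \tfrac{3}{5}\theta$, giving $\theta = \tfrac{5}{12}$ and $1-\theta = \tfrac{7}{12}$. Here $\|\rho^{(\e)}\|_{L^1} = \|\rho^{(\e)}_0\|_{L^1} \le \|\rho_0\|_{L^1}$ by conservation of mass for the regularized system (the Fokker--Planck operator is conservative and the transport part preserves the integral). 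Then for the Manev term $\|\rho^{(\e)}\|_{L^{6/5}}^2 \le \|\rho_0\|_{L^1}^{7/6}\|\rho^{(\e)}\|_{L^{5/3}}^{5/6}$ --- this is not yet in the stated form, so I would instead split the power differently: bound one of the two HLS factors in $L^{6/5}$ by interpolation as above and the other directly, or rather re-examine. The cleaner route for the Manev term is to note we want the exponent $5/3$ on $\|\rho^{(\e)}\|_{L^{5/3}}$ and $1/3$ on $\|\rho_0\|_{L^1}$; since $2 = 5/3 + 1/3$, this is exactly the interpolation $\|\rho^{(\e)}\|_{L^{6/5}}^2 \le \big(\|\rho_0\|_{L^1}^{1/5}\|\rho^{(\e)}\|_{L^{5/3}}^{4/5}\big)^{5/3+1/3}$? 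Let me instead just say: interpolating $\|\rho^{(\e)}\|_{L^{6/5}} \le \|\rho^{(\e)}\|_{L^1}^{1/6}\|\rho^{(\e)}\|_{L^{5/3}}^{5/6}$ (check: $\tfrac{5}{6} = \tfrac16\cdot 1 + \tfrac56\cdot\tfrac35$, yes) yields $\|\rho^{(\e)}\|_{L^{6/5}}^2 \le \|\rho_0\|_{L^1}^{1/3}\|\rho^{(\e)}\|_{L^{5/3}}^{5/3}$, which is precisely the Manev term in the claim. For the Coulomb term one uses a different interpolation exponent for $\|\rho^{(\e)}\|_{L^{6/5}}$ so as to produce $\|\rho^{(\e)}\|_{L^{5/3}}^{5/6}$ and $\|\rho_0\|_{L^1}^{7/6}$; since $2 = 5/6 + 7/6$, one writes $\|\rho^{(\e)}\|_{L^{6/5}}^2 \le \|\rho^{(\e)}\|_{L^{5/3}}^{5/6}\|\rho^{(\e)}\|_{L^1}^{7/6}$ provided $\tfrac{5\cdot 5/6}{3\cdot 2} + \tfrac{7/6}{2} \cdot 1 = \tfrac56$, i.e. $\tfrac{25}{36} + \tfrac{7}{12} = \tfrac{25}{36}+\tfrac{21}{36} = \tfrac{46}{36}\neq \tfrac56$; so instead this follows from the interpolation $\|\rho^{(\e)}\|_{L^{6/5}} \le \|\rho^{(\e)}\|_{L^1}^{7/12}\|\rho^{(\e)}\|_{L^{5/3}}^{5/12}$ (check: $\tfrac{7}{12} + \tfrac{5}{12}\cdot\tfrac35 = \tfrac{7}{12}+\tfrac{3}{12}=\tfrac{10}{12}=\tfrac56$, yes), squared.

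The only genuinely non-routine point is verifying that mass is conserved for the regularized system, i.e. $\|\rho^{(\e)}(\cdot,t)\|_{L^1} = \|f^{(\e)}_0\|_{L^1} \le \|\rho_0\|_{L^1}$ for all $t$; this follows from integrating \eqref{eq:reg} over $\R^3\times\R^3$, using that the transport terms and the Fokker--Planck term $\sigma\nabla_v\cdot(\nabla_v f^{(\e)} + v f^{(\e)})$ integrate to zero (the force field $\nabla K^\e \star \rho^{(\e)}$ is bounded and Lipschitz, so the divergence-form terms vanish), and the fact that $f^{(\e)}$ stays nonnegative. Everything else is a direct application of HLS on $\R^3$ and elementary $L^p$ interpolation, with constants independent of $\e$ because the kernels are dominated by the unregularized ones. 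Assembling the two estimates and using $\|\rho^{(\e)}\|_{L^1} \le \|\rho_0\|_{L^1}$ gives the claimed bound.
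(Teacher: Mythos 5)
Your overall strategy (use $K^\e \le K$ pointwise, apply Hardy--Littlewood--Sobolev to each of the two kernels, interpolate between $L^1$ and $L^{5/3}$, and invoke conservation of mass) is exactly the paper's approach. However, there is a concrete arithmetic error in your treatment of the Manev term that you then patch with a second arithmetic error, so the intermediate steps do not hold as written.

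For the kernel $|x|^{-2}$ in $\R^3$ the HLS constraint $1/p + 1/q + \lambda/3 = 2$ with $\lambda = 2$ and $p = q$ gives $2/p = 4/3$, i.e.\ $p = q = 3/2$, \emph{not} $p = q = 6/5$; the value $6/5$ belongs to the Coulomb case $\lambda = 1$. Because you used the wrong exponent, your first interpolation from $L^1, L^{5/3}$ to $L^{6/5}$ correctly produced $\theta = 5/12$ and the powers $7/6$ and $5/6$ --- which is of course the right answer for the Coulomb term, not the Manev term --- and so seemed ``not in the stated form.'' You then asserted $\|\rho^{(\e)}\|_{L^{6/5}} \le \|\rho^{(\e)}\|_{L^1}^{1/6}\|\rho^{(\e)}\|_{L^{5/3}}^{5/6}$ with the check $\tfrac56 = \tfrac16 + \tfrac56\cdot\tfrac35$, but the right-hand side equals $\tfrac16 + \tfrac12 = \tfrac23$, not $\tfrac56$. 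That interpolation (with $\theta = 5/6$) is valid, but for $L^{3/2}$ rather than $L^{6/5}$: $\tfrac23 = \tfrac16\cdot 1 + \tfrac56\cdot\tfrac35$. In other words, had you started from the correct $p = q = 3/2$ in HLS, the interpolation $\|\rho^{(\e)}\|_{L^{3/2}}^2 \le \|\rho^{(\e)}\|_{L^1}^{1/3}\|\rho^{(\e)}\|_{L^{5/3}}^{5/3}$ would follow immediately, giving the Manev bound without any gymnastics. As it stands, your write-up contains two compensating mistakes, which would not survive a careful check even though the final exponents are right. Your Coulomb estimate is correct as written.

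For what it is worth, the paper's proof avoids this pitfall by working with arbitrary $p, q$ satisfying the HLS relation and observing that after interpolating both factors into $L^1$ and $L^{5/3}$ the combined exponent on $\|\rho\|_{L^{5/3}}$ is $\alpha + \beta = \frac{\gamma}{\gamma-1}\cdot\frac{\lambda}{3}$, depending only on $\lambda$ and the endpoint $\gamma = 5/3$, not on the split between $p$ and $q$. This is a slightly more robust bookkeeping that keeps you from having to track the specific symmetric pair. Your remark on mass conservation for the regularized system and the remark that constants are $\e$-independent because $K^\e \le K$ both match the paper's reasoning.
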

\begin{proof} 
We first recall the classical Hardy--Littlewood--Sobolev inequality:
\bq\label{HLS}
\lt|\intrr \mu(x)|x-y|^{-\lambda} \nu(y)\,dxdy\rt| \leq C_{p,\lambda} \|\mu\|_{L^p} \|\nu\|_{L^q}
\eq
for $\mu \in L^p(\R^3)$, $\nu \in L^q(\R^3)$, $1 < p,\,q < \infty$, $1/p + 1/q + \lambda/3 = 2$, and $0 < \lambda < 3$. 

By $L^p$-interpolation inequality, for $1 \leq p \leq \gamma$, we observe
\[
\|\rho\|_{L^p} \leq \|\rho\|_{L^1}^{1-\alpha} \|\rho\|_{L^\gamma}^\alpha, \quad \frac1p = 1 - \alpha + \frac\alpha\gamma
\]
and
\[
\|\rho\|_{L^q} \leq \|\rho\|_{L^1}^{1-\beta} \|\rho\|_{L^\gamma}^\beta, \quad \frac1q = 1 - \beta + \frac\beta\gamma.
\]
This gives
\[
\|\rho\|_{L^p}\|\rho\|_{L^q} \leq \|\rho\|_{L^1}^{2-(\alpha + \beta)} \|\rho\|_{L^\gamma}^{\alpha + \beta}.
\]
Here if $p$ and $q$ satisfy $1/p + 1/q + \lambda/3 = 2$, then we readily check 
\[
\alpha + \beta = \frac{\gamma}{\gamma - 1} \cdot \frac{\lambda}{3}.
\]
This together with taking $\gamma = \frac53$ and $\lambda = 2$ yields
\[
 \intrr K^\e_{\rm M}(x-y)\rho^{(\e)} (x) \rho^{(\e)} (y) \,dxdy \leq \intrr K_{\rm M}(x-y)\rho^{(\e)} (x) \rho^{(\e)} (y) \,dxdy \leq CC_{\rm M}\|\rho_0\|_{L^1}^{\frac13}\|\rho^{(\e)}\|_{L^\frac53}^{\frac53}
\]
for some $C>0$ independent of $\e > 0$. Analogously, we  find
\[
 \intrr K^\e_{\rm C}(x-y)\rho^{(\e)} (x) \rho^{(\e)} (y) \,dxdy  \leq CC_{\rm C}\|\rho_0\|_{L^1}^{\frac76}\|\rho^{(\e)}\|_{L^\frac53}^{\frac56},
\]
for some $C>0$ independent of $\e > 0$. This completes the proof. 
\end{proof}

As we have mentioned in the introduction, we will need some compactness results for the local density $\rho^{(\e)}$. For this, in the following proposition, we provide the uniform-in-$\e$ estimate on the second moment of $f^{(\e)}$. 
\begin{proposition}\label{prop_reg}
Let $T>0$ and $f^{(\e)}$ be the weak solution to \eqref{eq:reg} on the interval $[0,T]$. Suppose that $C_{\rm M}\|\rho_0\|_{L^1}^{\frac13}$ is small enough. Then the following bound estimate on the second moment holds
\bq\label{uni_est}
\intrr \left(\frac{|v|^2}{2} + \frac{|x|^2}{2} \right) f^{(\e)} \,dxdv + \sigma \int_0^t\intrr\frac{1}{f^{(\e)}} |\nabla_v f^{(\e)} + vf^{(\e)}|^2  \,dxdv ds\le C,
\eq
for all $t \in [0,T]$ and some $C > 0$ independent of $\e$.
 
\end{proposition}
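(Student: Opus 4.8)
The plan is to derive a differential inequality for the ``total energy'' functional
\[
\calE^{(\e)}(t) := \intrr \lt(\frac{|v|^2}{2} + \frac{|x|^2}{2}\rt) f^{(\e)} \,dxdv + \frac12 \intrr K^\e(x-y) \rho^{(\e)}(x)\rho^{(\e)}(y)\,dxdy,
\]
track it in time, and then use Lemma \ref{lem_tech2} together with the smallness of $C_{\rm M}\|\rho_0\|_{L^1}^{1/3}$ to absorb the (possibly negative) interaction term back into the kinetic part, so that $\calE^{(\e)}$ controls (up to constants) the second moment. First I would compute $\frac{d}{dt}$ of each piece using the equation \eqref{eq:reg}: testing against $|v|^2/2$ produces the transport term $\intrr (v\cdot x) f^{(\e)}$, the force term $-\intrr v\cdot(\nb K^\e \star \rho^{(\e)}) f^{(\e)}$, and the Fokker--Planck contribution $\sigma\intrr(3f^{(\e)} - |v|^2 f^{(\e)})$; testing against $|x|^2/2$ gives $\intrr(v\cdot x) f^{(\e)}$ with the opposite... actually the same sign, and I would combine the two so the cross terms $\pm \intrr (v\cdot x) f^{(\e)}$ appear with a definite sign that can be bounded by the second moments via Cauchy--Schwarz. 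The key cancellation is that $\frac{d}{dt}\cdot\frac12\intrr K^\e \rho^{(\e)}\otimes\rho^{(\e)}$ produces exactly $+\intrr v\cdot(\nb K^\e\star\rho^{(\e)}) f^{(\e)}$ (using symmetry of $K^\e$ and the continuity equation $\rd_t\rho^{(\e)} + \nb_x\cdot(\rho^{(\e)} u^{(\e)}) = 0$ obtained by integrating \eqref{eq:reg} in $v$), which cancels the force term from the kinetic energy computation. The Fokker--Planck dissipation is rewritten, as in Lemma \ref{lem_inf}, in the form $-\sigma\intrr \frac1{f^{(\e)}}|\nb_v f^{(\e)} + v f^{(\e)}|^2$ plus lower-order terms, using the identity $|\nb_v f + vf|^2/f = |\nb_v f|^2/f + 2 v\cdot\nb_v f + |v|^2 f$ and integrating the middle term by parts.

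After these manipulations I expect an inequality of the schematic form
\[
\frac{d}{dt}\calE^{(\e)}(t) + \sigma\intrr \frac1{f^{(\e)}}|\nb_v f^{(\e)} + v f^{(\e)}|^2\,dxdv \leq C\,\calE^{(\e)}(t) + C\sigma,
\]
where the $C\calE^{(\e)}$ comes from the cross term $\intrr(v\cdot x)f^{(\e)} \leq \frac12\intrr(|v|^2+|x|^2)f^{(\e)}$ and the $C\sigma$ from the $3\sigma\intrr f^{(\e)} = 3\sigma\|\rho_0\|_{L^1}$ term (mass is conserved, $\|\rho^{(\e)}\|_{L^1} = \|f_0^{(\e)}\|_{L^1} \le \|\rho_0\|_{L^1}$). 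The main obstacle — and the place where the smallness hypothesis enters — is that $\calE^{(\e)}$ is not manifestly comparable to the second moment, because the interaction term can be negative (the potential is attractive) and $K^\e_{\rm M}$ scales exactly like the $L^{5/3}$ norm squared of $\rho^{(\e)}$. By Lemma \ref{lem_tech2} and Lemma \ref{lem_tech},
\[
\lt|\intrr K^\e \rho^{(\e)}\otimes\rho^{(\e)}\rt| \leq CC_{\rm M}\|\rho_0\|_{L^1}^{1/3}\|\rho^{(\e)}\|_{L^{5/3}}^{5/3} + CC_{\rm C}\|\rho_0\|_{L^1}^{7/6}\|\rho^{(\e)}\|_{L^{5/3}}^{5/6}
\leq C'C_{\rm M}\|\rho_0\|_{L^1}^{1/3}\, E_{\rm kin}^{(\e)} + C'C_{\rm C}\|\rho_0\|_{L^1}^{7/4}\,(E_{\rm kin}^{(\e)})^{1/4},
\]
writing $E_{\rm kin}^{(\e)} := \intrr \frac{|v|^2}{2}f^{(\e)}\,dxdv$ and using Lemma \ref{lem_tech} with $p = 5/3$ (so $\alpha = 1$) and with the interpolation $\|\rho^{(\e)}\|_{L^{5/3}}^{5/6} \le C(E_{\rm kin}^{(\e)})^{1/4}\|\rho_0\|_{L^1}^{\cdots}$ for the Coulomb part; the $C_C$ term, having a sublinear power $(E_{\rm kin}^{(\e)})^{1/4}$, is harmless and absorbed by Young's inequality at the cost of an additive constant. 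Choosing $\varepsilon_0$ so that $C'\,C_{\rm M}\|\rho_0\|_{L^1}^{1/3} = C'\varepsilon_0 \leq \frac12$ gives
\[
\tfrac14 E_{\rm kin}^{(\e)} + \intrr\tfrac{|x|^2}{2}f^{(\e)} - C_\sigma \;\le\; \calE^{(\e)}(t) \;\le\; \tfrac74 E_{\rm kin}^{(\e)} + \intrr \tfrac{|x|^2}{2}f^{(\e)} + C_\sigma,
\]
i.e. $\calE^{(\e)}$ is equivalent to the second moment up to an additive constant.

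Finally I would close the argument by Grönwall: from the differential inequality and the two-sided bound, $\calE^{(\e)}(t) + \sigma\int_0^t(\cdots) \le (\calE^{(\e)}(0) + C_\sigma + CT)e^{CT}$, and $\calE^{(\e)}(0)$ is bounded uniformly in $\e$ because $f_0^{(\e)} \le f_0$ pointwise, hence its second moment is dominated by that of $f_0$ and, crucially, $\intrr K^\e(x-y)\rho_0^{(\e)}\otimes\rho_0^{(\e)} \le \intrr K(x-y)\rho_0\otimes\rho_0 < \infty$ by the hypothesis $(K\star\rho_0)f_0 \in L^1$. Translating back via the equivalence yields \eqref{uni_est} with $C$ depending on $T$, $\sigma$, $\|f_0\|_{L^1\cap L^\infty}$, the second moment of $f_0$, and $\intrr(K\star\rho_0)\rho_0$, but not on $\e$. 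One technical point to be careful about: these computations are formal on the regularized level and should be justified either by the smoothness of $f^{(\e)}$ guaranteed by the standard existence theory cited after \eqref{eq:reg}, or by a further mollification/truncation of the weight $|x|^2 + |v|^2$; I would remark on this but not belabor it. The genuinely delicate step remains the absorption of the attractive Manev interaction, which is \emph{exactly} marginal — this is why the Manev exponent is critical and why no smallness is needed for $\alpha < 2$, where $\|\rho^{(\e)}\|$ appears with a subcritical power.
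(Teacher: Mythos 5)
Your macroscopic architecture---time-differentiate the kinetic energy, the $|x|^2$-moment, and the interaction energy, observe that the force terms cancel, bound the interaction via Lemmas \ref{lem_tech} and \ref{lem_tech2}, and close by absorption and Gr\"onwall---coincides with the paper's. However, there is a genuine gap in your treatment of the Fokker--Planck dissipation, and it is exactly the point where the paper is forced to add the entropy $\intrr f^{(\e)}\log f^{(\e)}\,dxdv$ to the energy functional.

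The FP contribution to $\frac{d}{dt}\frac12\intrr|v|^2 f^{(\e)}\,dxdv$ is $-\sigma\intrr v\cdot(\nb_v f^{(\e)}+vf^{(\e)})\,dxdv = 3\sigma\intrr f^{(\e)}\,dxdv - \sigma\intrr |v|^2 f^{(\e)}\,dxdv$, which is \emph{not} the relative Fisher information. If you insist on writing it in that shape via your pointwise identity and integrating $2v\cdot\nb_v f^{(\e)}$ by parts, you obtain
\[
3\sigma\intrr f^{(\e)}\,dxdv - \sigma\intrr |v|^2 f^{(\e)}\,dxdv
= -\sigma\intrr\frac{|\nb_v f^{(\e)}+vf^{(\e)}|^2}{f^{(\e)}}\,dxdv + \sigma\intrr\frac{|\nb_v f^{(\e)}|^2}{f^{(\e)}}\,dxdv - 3\sigma\intrr f^{(\e)}\,dxdv,
\]
and the residual $\sigma\intrr|\nb_v f^{(\e)}|^2/f^{(\e)}\,dxdv$ has the \emph{wrong sign}: it is nonnegative, uncontrolled, and is not a lower-order term. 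The only route to $-\sigma\intrr\frac1{f^{(\e)}}|\nb_v f^{(\e)}+vf^{(\e)}|^2\,dxdv$ is to also differentiate the entropy, since
\[
\frac{d}{dt}\intrr f^{(\e)}\log f^{(\e)}\,dxdv = -\sigma\intrr\frac{\nb_v f^{(\e)}}{f^{(\e)}}\cdot(\nb_v f^{(\e)}+vf^{(\e)})\,dxdv,
\]
and adding this to the kinetic-energy FP term produces precisely $-\sigma\intrr\frac1{f^{(\e)}}|\nb_v f^{(\e)}+vf^{(\e)}|^2\,dxdv$. This is why the hypotheses require $|\log f_0|f_0\in L^1$ when $\sigma>0$. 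Once the entropy is in the functional, one must also control its negative part; the paper does so by $2\intrr f^{(\e)}\log_- f^{(\e)}\,dxdv \le \intrr(\tfrac{|x|^2}2+\tfrac{|v|^2}2)f^{(\e)}\,dxdv + e^{-1}\intrr e^{-|x|^2/4-|v|^2/4}\,dxdv$, another step absent from your plan. Note that Lemma \ref{lem_inf} does not help: its dissipation is $\intrr|\nb_v(f^{(\e)})^{p/2}|^2\,dxdv$, which degenerates as $p\to1$ and is in any case a different quantity. Two lesser slips: your $\calE^{(\e)}$ carries the interaction energy with a $+$ sign, whereas the force-term cancellation requires $-\tfrac12\intrr K^\e(x-y)\rho^{(\e)}(x)\rho^{(\e)}(y)\,dxdy$; and the Coulomb contribution scales like $(E_{\rm kin})^{1/2}$ rather than $(E_{\rm kin})^{1/4}$, though any exponent below $1$ suffices for Young's inequality.
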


\begin{proof}[Proof of Proposition \ref{prop_reg}]
Straightforward computations give
\[
\frac12\frac{d}{dt}\left(\intrr |v|^2 f^{(\e)} \,dxdv\right) = \intrr( \nabla K^\e \star \rho^{(\e)})\cdot v f^{(\e)} \,dxdv  - \sigma\intrr v \cdot (\nabla_v f^{(\e)} + vf^{(\e)}) \,dxdv.
\]
On the other hand, we get
\begin{align*}
\frac{d}{dt}\left(\frac{1}{2}\intrr K^\e(x-y)\rho^{(\e)}(x) \rho^{(\e)}(y) \,dxdy\right) 
&= \intrr K^\e(x-y)\partial_t\rho^{(\e)}(x) \rho^{(\e)}(y) \,dxdy\\
&= -\intrr K^\e(x-y)\nabla \cdot (\rho^{(\e)} u^{(\e)})(x) \rho^{(\e)}(y)\,dxdy\\
&= \intrr \nabla K^\e(x-y) \rho^{(\e)}(y) \cdot (\rho^{(\e)} u^{(\e)})(x) \,dxdy\\
&= \intr (\nabla K^\e \star \rho^{(\e)}) \cdot  (\rho^{(\e)} u^{(\e)}) \,dx\\
&= \intrr(\nabla K^\e \star \rho^{(\e)}) \cdot v f^{(\e)} \,dxdv.
\end{align*}
This yields
\begin{align*}
&\frac{d}{dt}\left(\frac12\intrr |v|^2 f^{(\e)} \,dxdv - \frac{1}{2}\intrr K^\e(x-y)\rho^{(\e)}(x) \rho^{(\e)}(y) \,dxdy\right)\cr
&\quad = - \sigma\intrr v \cdot (\nabla_v f^{(\e)} + vf^{(\e)}) \,dxdv.
\end{align*}
We then combine the previous estimates with the following entropy estimate
\begin{align*}
&\frac{d}{dt}\left(\intrr  f^{(\e)} \log f^{(\e)} \,dxdv\right)\cr
&\quad = \intrr (\partial_t f^{(\e)}) \log f^{(\e)}\,dxdv\\
&\quad = - \intrr (   \nabla K^\e \star \rho^{(\e)})\cdot \nabla_v f^{(\e)} \,dxdv -\sigma \intrr \frac{\nabla_v f^{(\e)}}{f^{(\e)}} \cdot (\nabla_v f^{(\e)} + v f^{(\e)})\,dxdv\\
&\quad =   -\sigma \intrr \frac{\nabla_v f^{(\e)}}{f^{(\e)}} \cdot (\nabla_v f^{(\e)} + v f^{(\e)})\,dxdv,
\end{align*}
and thus
\begin{align}\label{energy1}
\begin{aligned}
\frac{d}{dt}&\left( \intrr \left(\frac{|v|^2}{2}   +   \log f^{(\e)} \right) f^{(\e)} \,dxdv - \frac{1}{2} \intrr K^\e(x-y)\rho^{(\e)} (x) \rho^{(\e)} (y) \,dxdy\right)\\
&\qquad + \sigma\intrr\frac{1}{f^{(\e)}} |\nabla_v f^{(\e)} + vf^{(\e)}|^2  \,dxdv = 0.
\end{aligned}
\end{align}
We notice that the entropy can be negative, and therefore we need to estimate the negative part of the entropy. For this, we first estimate the second moment on the spatial variable as
\[
\frac{d}{dt}\left(\intrr \frac{|x|^2}{2} f^{(\e)} \,dxdv\right) = \intrr x \cdot v f^{(\e)} \,dxdv \le \intrr \left(\frac{|x|^2}{2} + \frac{|v|^2}{2}\right) f^{(\e)} \,dxdv.
\]
Moreover, we observe that the following inequality holds
\[
2\intrr f^\varepsilon \log_{-}f^\varepsilon \,dx dv  \le \intrr f^\varepsilon \left( \frac{|x|^2}{2} + \frac{|v|^2}{2} \right) dx dv  + \frac{1}{e}\intrr e^{-\frac{|v|^2}{4} - \frac{|x|^2}{4}} dx dv,
\]
where $\log_{-}g(x) := \max\{0, -\log g(x)\}$. We then combine these estimates with \eqref{energy1} to get
\begin{align*}
&\intrr \left(\frac{|v|^2}{2} + \frac{|x|^2}{2} +  |\log f^{(\e)}| \right) f^{(\e)} \,dxdv - \frac{1}{2} \intrr K^\e(x-y)\rho^{(\e)} (x) \rho^{(\e)} (y) \,dxdy\\
& \quad+\sigma \int_0^t\intrr\frac{1}{f^{(\e)}} |\nabla_v f^{(\e)} + vf^{(\e)}|^2  \,dxdv ds \\
&\qquad \le \intrr\left(\frac{|v|^2}{2} + \frac{|x|^2}{2} +  |\log f_0^{(\e)}| \right) f_0^{(\e)} \,dxdv  - \frac{1}{2} \intrr K^\e(x-y)\rho^{(\e)}_0(x) \rho^{(\e)}_0 (y) \,dxdy \\
&\qquad \quad   +\int_0^t\intrr \left(|v|^2 +|x|^2\right)f^{(\e)}\,dxdvds + C,
\end{align*}
where $C$ is a positive constant independent of $\e$.

We now further need to estimate the interaction energy. We first combine Lemmas \ref{lem_inf} and \ref{lem_tech} to obtain
\[
\|\rho^{(\e)}\|_{L^\frac53} \leq (C\|f^{(\e)}\|_{L^\infty} + 1)\lt(\intrr |v|^2 f^{(\e)}\,dxdv\rt)^{\frac35} \leq C\lt(\intrr |v|^2 f^{(\e)}\,dxdv\rt)^{\frac35} 
\]
for some $C>0$ independent of $\e>0$. This together with Lemma \ref{lem_tech2} yields
\begin{align*}
&\lt| \intrr K^\e(x-y)\rho^{(\e)} (x) \rho^{(\e)} (y) \,dxdy\rt| \cr
&\quad \leq CC_{\rm M}\|\rho_0\|_{L^1}^{\frac13}\lt(\intrr |v|^2 f^{(\e)}\,dxdv\rt)^{\frac35 \cdot \frac53}  + CC_{\rm C}\|\rho_0\|_{L^1}^{\frac76}\lt(\intrr |v|^2 f^{(\e)}\,dxdv\rt)^{\frac35 \cdot \frac56}\cr
&\quad  = CC_{\rm M}\|\rho_0\|_{L^1}^{\frac13}\intrr |v|^2 f^{(\e)}\,dxdv  + CC_{\rm C}\|\rho_0\|_{L^1}^{\frac76}\lt(\intrr |v|^2 f^{(\e)}\,dxdv\rt)^{\frac12}.
\end{align*}
We then use the smallness assumption on $C_{\rm M}\|\rho_0\|_{L^1}^{\frac13}$ and Young's inequality to have
\begin{align*}
&\intrr \left(\frac{|v|^2}{2} + \frac{|x|^2}{2} +  |\log f^{(\e)}| \right) f^{(\e)} \,dxdv +\sigma \int_0^t\intrr\frac{1}{f^{(\e)}} |\nabla_v f^{(\e)} + vf^{(\e)}|^2  \,dxdv ds \\
&\quad \le \intrr\left(\frac{|v|^2}{2} + \frac{|x|^2}{2} +   |\log f_0^{(\e)}| \right) f_0^{(\e)} \,dxdv + 2\int_0^t\intrr \left(|v|^2 +|x|^2\right)f^{(\e)}\,dxdvds  + C
\end{align*}
for some $C>0$ independent of $\e>0$. Finally, applying Gr\"onwall's lemma to the above concludes the desired result.
\end{proof}

 { 
 \begin{remark}\label{rmk_mild} If the interaction potential $K = K(x)$ is given by \eqref{k_subM},  then we use a similar argument as in the proof of Lemma \ref{lem_tech2} to find
 \[
\lt| \intrr K(x-y) \rho(x)\rho(y)\,dxdy\rt| \leq C\|\rho_0\|_{L^1}^{2 - \frac56 \alpha}\|\rho\|_{L^\frac53}^{\frac56 \alpha}.
 \]
On the other hand, by Lemma \ref{lem_tech2}, we get
\[
\|\rho\|_{L^\frac53}^{\frac56 \alpha} \leq C\lt(\intrr |v|^2 f \,dxdv\rt)^{\frac35 \cdot \frac56 \alpha} = C\lt(\intrr |v|^2 f \,dxdv\rt)^{\frac\alpha2}, 
\]
and thus
 \[
\lt| \intrr K(x-y) \rho (x)\rho(y)\,dxdy\rt| \leq C\|\rho_0\|_{L^1}^{2 - \frac56 \alpha}\lt(\intrr |v|^2 f\,dxdv\rt)^{\frac\alpha2}.
 \]
 Since $\frac\alpha2 < 1$, we use the Young's inequality to deduce that for any $\delta > 0$
  \[
\lt| \intrr K(x-y) \rho(x)\rho(y)\,dxdy\rt| \leq \delta\intrr |v|^2 f\,dxdv + C,
 \]
where $C>0$ is independent of the solution. This shows that if the singularity of interaction potential is given as in \eqref{k_subM}, i.e. less singular than the pure Manev case, then (by regularizing the kernel as before) the uniform-in-$\e$ bound estimates \eqref{uni_est} can be obtained without any smallness assumption on the initial data.
 \end{remark}
 }

\begin{remark}\label{rmk_rep} If the interaction potential is given by \eqref{k_rep}, then we do not need to estimate the interaction energy since it is nonnegative. In particular, any smallness assumption on the initial data is not required. 
\end{remark}
 
\begin{remark}\label{rmk_2D}
In the two dimensional case, one can easily check that 
\[
\|\rho\|_{L^2} \leq (C\|f\|_{L^\infty} + 1)\lt(\intrr |v|^2 f\,dxdv\rt)^{\frac12}.
\]
under the same assumptions as in Lemma \ref{lem_tech}. If we consider the pure Manev potential, i.e., $K = |x|^{-1}$, then applying almost the same argument as in the proof of Lemma \ref{lem_tech2} gives
\bq\label{HLS}
\lt|\int_{\R^2 \times \R^2} |x-y|^{-1} \rho(x)\rho(y)\,dxdy\rt| \leq C \|\rho_0\|_{L^1} \|\rho\|_{L^2}.
\eq
Thus, the interaction energy can be estimated as
\[
\lt|\int_{\R^2 \times \R^2} |x-y|^{-1} \rho(x)\rho(y)\,dxdy\rt| \leq C\lt(\intrr |v|^2 f\,dxdv\rt)^{\frac12} \leq \delta\intrr |v|^2 f^{(\e)}\,dxdv + C
\]
for any $\delta >0$, with an absolute constant $C > 0$. Then, for the same reason as in Remark \ref{rmk_mild}, we have the uniform bound estimate \eqref{uni_est} without any smallness assumption on the initial data.
\end{remark}

\subsection{Proof of Theorem \ref{main_thm}}
Now, we provide the global existence of the weak solution to \eqref{eq:VR} based on a compactness argument. For this purpose, we need the following modified velocity averaging lemma obtained in \cite[Lemma 2.7]{KMT13}, see also \cite{Glassey96, PS98}.
\begin{lemma}\label{lem_va}
Let $\{f^m\}$ be bounded in $L_{loc}^p(\R^3 \times \R^3 \times [0,T])$ with $1 < p < \infty$ and $\{G^m\}$ be bounded in $L_{loc}^p(\R^3 \times \R^3 \times [0,T])$. If $f^m$ and $G^m$ satisfy
\[
\pa_t f^m + v \cdot \nabla_x f^m = \nabla_v^\ell G^m, \quad f^m|_{t=0} = f_0 \in L^p(\R^3 \times \R^3).
\]
Suppose that
\[
\sup_{m \in \bbn} \|f^m\|_{L^\infty(\R^3 \times \R^3 \times (0,T))} + \sup_{m \in \bbn}\| (|v|^2 + |x|^2) f^m\|_{L^\infty(0,T;L^1(\R^3 \times \R^3))}  <\infty.
\]
Then, the sequence
\[ \left\{ \int_{\R^3} f^m \,dv \right\}_m \]
is relatively compact in $L^q(\R^3 \times (0,T))$ for any $q \in \left(1, \frac53\right)$.
\end{lemma}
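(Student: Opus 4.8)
The plan is to reduce the statement to the \emph{classical} velocity averaging lemma with a $v$-derivative source and a compactly supported velocity weight (the version recalled in \cite[Lemma 2.7]{KMT13}; see also \cite{Glassey96, PS98}): if $f^m$ is bounded in $L^p_{loc}(\R_t\times\R^3_x\times\R^3_v)$ with $1<p<\infty$, $G^m$ is bounded in $L^p_{loc}$, and $(\partial_t+v\cdot\nabla_x)f^m=\nabla_v^\ell G^m$, then for any $\psi\in C^\infty_c(\R^3_v)$ the average $\int f^m\psi\,dv$ is bounded in $W^{s,p}_{loc}(\R_t\times\R^3_x)$ for some $s=s(p,\ell)>0$, hence relatively compact in $L^p_{loc}$ by Rellich--Kondrachov. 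Compared with this, the statement to prove has two extra features: the weight is $\equiv 1$ rather than compactly supported --- which I would handle with the $|v|^2$-moment bound --- and the spatial domain is all of $\R^3$ --- which I would handle with the $|x|^2$-moment bound. The exponent $5/3$ is forced by Lemma \ref{lem_tech}.

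First I would record the uniform bounds. Applying Lemma \ref{lem_tech} to $f^m(\cdot,\cdot,t)$ together with $\sup_m\|f^m\|_{L^\infty}<\infty$ and $\sup_m\|(|v|^2+|x|^2)f^m\|_{L^\infty(0,T;L^1)}<\infty$ shows that $\{\rho^m\}$ is bounded in $L^\infty(0,T;L^1_x\cap L^{5/3}_x)$, hence in $L^{5/3}(\R^3_x\times(0,T))$, and that $\{|x|^2\rho^m\}$ is bounded in $L^\infty(0,T;L^1_x)$. (Since $f^m$ is also bounded in $L^1\cap L^\infty(\R^6)$ uniformly in $t$, the $L^p_{loc}$ hypothesis holds for all $p\in(1,\infty)$, so I am free to run the averaging lemma at any convenient exponent.)

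Next, velocity truncation. Fix $\chi\in C^\infty_c(\R^3_v)$ with $0\le\chi\le1$ and $\chi\equiv1$ on the unit ball, put $\chi_R(v)=\chi(v/R)$, and split $\rho^m=\rho^m_R+r^m_R$ with $\rho^m_R=\int f^m\chi_R\,dv$, $r^m_R=\int f^m(1-\chi_R)\,dv$. Since $1-\chi_R$ is supported in $\{|v|\ge R\}$, one gets $\|r^m_R(\cdot,t)\|_{L^1_x}\le R^{-2}\iint|v|^2f^m\,dxdv\le CR^{-2}$, while $0\le r^m_R\le\rho^m$ gives $\|r^m_R(\cdot,t)\|_{L^{5/3}_x}\le C$; interpolating in $L^q_x$ and integrating over the finite interval $(0,T)$ yields, for any $q\in(1,5/3)$, $\|r^m_R\|_{L^q(\R^3_x\times(0,T))}\le CR^{-2\theta}$ with $\theta=\theta(q)\in(0,1)$, uniformly in $m$. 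So it suffices to show, for each fixed $R$, that $\{\rho^m_R\}_m$ is relatively compact in $L^q(\R^3_x\times(0,T))$; a diagonal argument over $R\to\infty$ then finishes. For fixed $R$, $\chi_R\in C^\infty_c(\R^3_v)$, so I would apply the classical averaging lemma above to $\rho^m_R=\int f^m\chi_R\,dv$ --- dealing with the time boundary by extending $f^m$ to $t<0$ through free transport of the common initial datum $f_0$ (which preserves all relevant norms and is $m$-independent) and extending $G^m$ by zero, so the extended pair still satisfies an equation of the required form on an open interval containing $[0,T]$. This gives $\{\rho^m_R\}$ relatively compact in $L^p_{loc}(\R^3_x\times(0,T))$ for $p\in(1,q)$; interpolating against the uniform $L^{5/3}$ bound upgrades it to relative compactness in $L^q(B_\Lambda\times(0,T))$ for every $\Lambda>0$. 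Finally, $\{|x|^2\rho^m\}$ bounded in $L^\infty(0,T;L^1_x)$ gives $\|\rho^m\|_{L^1(\{|x|>\Lambda\})}\le C\Lambda^{-2}$, which interpolated against the $L^{5/3}$ bound shows $\{\rho^m_R\}$ is tight in $L^q(\R^3_x\times(0,T))$ uniformly in $m$; local compactness plus tightness gives relative compactness in $L^q(\R^3_x\times(0,T))$.

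The hard part is entirely the black box: the averaging estimate with a genuine $v$-derivative on the right-hand side, whose proof rests on Fourier-multiplier bounds quantifying the smallness of the set of near-resonant velocities $\{v:|\tau+v\cdot\xi|\lesssim\delta\}$ --- exactly the content imported from \cite[Lemma 2.7]{KMT13}. The remaining pieces --- truncation, interpolation among $L^1$, $L^{5/3}$, and $L^p$, and the tightness argument --- are routine. I also expect the endpoint $q=5/3$ to be genuinely excluded: there the velocity tail $r^m_R$ is only bounded, not small, in $L^{5/3}_x$, so one obtains boundedness but not compactness.
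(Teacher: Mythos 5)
The paper does not prove Lemma~\ref{lem_va} at all: it is imported verbatim as a black box from \cite[Lemma~2.7]{KMT13}, and the text preceding it makes this explicit. What you have done is reconstruct a proof of the KMT13 lemma from more primitive ingredients, and the architecture you describe is the right one (and essentially the one used in \cite{KMT13} itself): obtain $L^\infty(0,T;L^1\cap L^{5/3})$ control on $\rho^m$ from the $L^\infty$ and $|v|^2$-moment bounds via Lemma~\ref{lem_tech}; truncate in $v$ and use interpolation between the $L^1$ decay $\|r^m_R\|_{L^1_x}\lesssim R^{-2}$ and the $L^{5/3}$ bound to make the high-velocity tail small in $L^q$ for $q<5/3$ uniformly in $m$ (which also correctly explains why $q=5/3$ is excluded); apply the textbook $L^p$ averaging lemma with compactly supported velocity weights and a $\nabla_v^\ell$ source to each truncation to obtain local compactness; and finally upgrade to global $L^q(\R^3\times(0,T))$ compactness via the $|x|^2$-moment tightness together with a diagonal extraction in $R$. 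The interpolation step upgrading $L^{p_0}_{loc}$ compactness to $L^q(B_\Lambda\times(0,T))$ compactness against the uniform $L^{5/3}$ (or $L^\infty$) bound is correctly justified.

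Two small remarks. First, there is a bibliographic slip: you label the classical averaging lemma with compactly supported weight as ``\cite[Lemma~2.7]{KMT13}'', but that reference \emph{is} the statement you are proving; the genuine black box is the DiPerna--Lions--Meyer / B\'ezard / Perthame--Souganidis $L^p$ averaging estimate with a $v$-derivative source, as in \cite{Glassey96,PS98}. Second, relative compactness in $L^q(\R^3\times(0,T))$ also requires tightness near the temporal endpoints $t=0$ and $t=T$; this is immediate from the uniform $L^\infty(0,T;L^1_x\cap L^{5/3}_x)$ bound (so that $\|\rho^m\|_{L^q(\R^3\times((0,\delta)\cup(T-\delta,T)))}\to 0$ uniformly in $m$ as $\delta\to0$), and renders your free-transport extension past $t=0$ optional rather than essential, though it is a clean way to avoid cutting off in time.
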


We recall the uniform-in-$\e$ bound estimates obtained in Section \ref{ssec:uniform}:
\bq\label{est_uni}
\|f^{(\e)}\|_{L^\infty(0,T;L^p(\R^3\times\R^3))} + \|\rho^{(\e)}\|_{L^\infty(0,T;L^q(\R^3))}  \le C,
\eq
where $p \in [1,\infty]$, $q \in [1,\frac53]$, and $C>0$ is independent of $\e$. Thus, by weak compactness theory, we have the following weak convergence as $\e \to 0$ up to a subsequence:
\[
\begin{array}{lcll}
f^{(\e)} \rightharpoonup f  &\mbox{ in }&  L^\infty(0,T;L^p(\R^3\times\R^3)), & p\in[1,\infty],\\[2mm]
\displaystyle \rho^{(\e)} \rightharpoonup \rho & \mbox{ in } & L^\infty(0,T;L^p(\R^3)_, & p\in\lt[1,\frac53\rt],\\
\end{array}
\]
Let us write 
$$
G^\e:= \sigma (\nabla_v f^{(\e)} + vf^{(\e)}) - (  \nabla K^\e \star \rho^{(\e)}  )f^{(\e)}.
$$

We now claim that $G^\e \in L^p_{loc}(\R^3 \times \R^3 \times [0,T])$ for some $p \in (1,\infty)$ to apply the averaging lemma, Lemma \ref{lem_va}. A direct application of Calder\'on--Zygmund lemma gives
\[
\|(\nabla K^\e_{\rm M} \star \rho^{(\e)}  )f^{(\e)}\|_{L^q} \leq \|f^{(\e)}\|_{L^\infty}\| \nabla K_{\rm M} \star \rho^{(\e)}\|_{L^q}\leq C \|f^{(\e)}\|_{L^\infty}\|\rho^{(\e)}\|_{L^q}
\]
for $q < \frac53$. For the term with $K^\e_{\rm C}$, we use the following Hardy--Littlewood--Sobolev inequality 
\[
\| |\cdot|^{-\lambda} \star \rho\|_{L^q} \leq C\|\rho\|_{L^p} \quad \frac\lambda{3} = 1 - \frac1q + \frac1p
\]
with $\lambda = 2$, $q = \frac{15}{14}$ and $p = \frac53$ to estimate
\[
\|(\nabla K^\e_{\rm C} \star \rho^{(\e)}  )f^{(\e)}\|_{L^{\frac{15}{14}}} \leq   \|f^{(\e)}\|_{L^\infty} \||\cdot|^{-2} \star \rho^{(\e)} \|_{L^{\frac{15}{14}}} \leq C\|f^{(\e)}\|_{L^\infty} \|\rho^{(\e)}\|_{L^{\frac53}},
\]
where $C>0$ is independent of $\e$. We also easily find from Proposition \ref{prop_reg} that for any $q < 2$
\begin{align*}
\intrr |\nabla_v f^{(\e)} + vf^{(\e)}|^q\,dxdv &= \intrr (f^{(\e)})^{\frac{q}2} (f^{(\e)})^{-\frac{q}2}|\nabla_v f^{(\e)} + vf^{(\e)}|^q\,dxdv\cr
&\leq \lt( \intrr\frac{1}{f^{(\e)}} |\nabla_v f^{(\e)} + vf^{(\e)}|^2  \,dxdv\rt)^{\frac{q}2}\|f^{(\e)}\|_{L^{\frac{q}{2-q}}}^{\frac{q}{2}}.
\end{align*}
Combining all of the above estimates and \eqref{est_uni} yields $G^\e\in L_{loc}^{\frac{15}{14}}(\R^3\times\R^3 \times (0,T))$.
We then apply Lemma \ref{lem_va} to obtain, for $p \in (1, \frac53)$,
\bq\label{st_conv}
\begin{array}{lcl}
\displaystyle \rho^{(\e)} \to \rho & \mbox{ in } & L^p(\R^3\times (0,T)) \ \mbox{ and a.e.},\\
\end{array}
\eq
as $\e \to 0$, up to a subsequence. \\

\begin{proof}[Proof of Theorem \ref{main_thm}]
We are now in a position to show that the limit function $f$ satisfies the system \eqref{eq:VR} in the distributional sense. We shall show that 
\[
(\nabla K^\e \star \rho^{(\e)})f^{(\e)} \to (\nabla K \star \rho)f,
\] in the sense of distributions; the other terms are linear and therefore easily handled.

We choose $\Psi\in\mathcal{C}_c^\infty(\R^3 \times \R^3 \times [0,T])$ and get
\begin{align*}
\int_0^t& \intrr \left[(\nabla K^\e \star \rho^{(\e)})f^{(\e)} - (\nabla K \star \rho)f\right]\Psi\,dxdvds\\
&= \int_0^t \intr (\nabla (K^\e - K)\star \rho )\rho_\Psi \,dxds\\
&\quad+\int_0^t \intr \nabla K^\e \star (\rho^{(\e)} -\rho)\rho_\Psi^{(\e)}\,dxds + \int_0^t \intr (\nabla K^\e \star\rho)(\rho_\Psi^{(\e)}-\rho_\Psi)\,dxds\\
&=: \mbox{(I)}^\e + \mbox{(II)}^\e + \mbox{(III)}^\e.
\end{align*}
Here we denoted by $\rho_\Psi := \intr f \Psi\,dv$ and $\rho_\Psi^{(\e)} :=  \intr f^{(\e)} \Psi\,dv$.

Note that thanks to the uniform-in-$\e$ estimate for $f^{(\e)}$ in $L^\infty(\R^3\times\R^3\times(0,T))$ and the compact support of $\Psi$, we find
\bq\label{est_rho}
\rho_\Psi, \ \rho_\Psi^{(\e)} \in L^p(0,T; L^q(\R^3))
\eq
for any $p,q \in [1,\infty]$ uniformly in $\e$. \\

$\bullet$ Estimate of (I)$^\e$:  Note that $|(\nabla K^\e \star \rho) \rho_\Psi| \leq |\nabla K \star \rho| |\rho_{\Psi}|$ and $(\nabla K^\e \star \rho) \rho_\Psi \to (\nabla K\star \rho) \rho_\Psi$ pointwise $\e \to 0$. We use the Calder\'on--Zygmund lemma to get
\[
\int_0^t \intr |\nabla K_{\rm M} \star \rho| |\rho_\Psi | \,dxds \leq C\|\rho\|_{L^p(\R^3 \times (0,T))}\|\rho_\Psi\|_{L^{p'}(\R^3 \times (0,T))}
\]
for any $p \in (1, \frac53)$, where $p'$ is the H\"older conjugate of $p$. By \eqref{HLS}, we also observe 
\begin{align*}
\int_0^t \intr (|\nabla K_{\rm C}| \star \rho) |\rho_\Psi | \,dxds &\leq \int_0^t \intrr \rho(x) \frac{1}{|x-y|^2} |\rho_\Psi|(y)\,dxdyds \cr
&\leq C\|\rho\|_{L^p(\R^3 \times (0,T))} \|\rho_\Psi\|_{L^{p'}(0,T;L^q(\R^3))},
\end{align*}
where $p \in (\frac34,\frac53)$ and $q$ is given by $\frac1q = \frac43 - \frac1p > 0$, and $p'$ is the H\"older conjugate of $p$. Then, by the dominated convergence theorem, we have $\mbox{(I)}^\e \to 0$ as $\e \to 0$. \\

$\bullet$ Estimate of (II)$^\e$: Similarly as in the estimate of (I)$^\e$, we use the Calder\'on--Zygmund lemma to find
\[
\lt|\int_0^t \intr \nabla K^\e_{\rm M} \star (\rho^{(\e)} -\rho)\rho_\Psi^{(\e)}\,dxds\rt| \leq C\|\rho^{(\e)} -\rho\|_{L^p(\R^3 \times (0,T))}\|\rho_\Psi^{(\e)}\|_{L^{p'}(\R^3 \times (0,T))},
\]
for any $p \in (1, \frac53)$, where $C>0$ is independent of $\e>0$. This together with \eqref{est_rho} implies
\[
\int_0^t \intr \nabla K^\e_{\rm M} \star (\rho^{(\e)} -\rho)\rho_\Psi^{(\e)}\,dxds \to 0
\]
as $\e\to 0$. We also find for $p \in (\frac34, \frac53)$
\begin{align*}
\lt|\int_0^t \intr \nabla K^\e_{\rm C} \star (\rho^{(\e)} -\rho)\rho_\Psi^{(\e)}\,dxds\rt| &\leq  \int_0^t \intrr |\rho^{(\e)} -\rho|(x) \frac{1}{|x-y|^2} |\rho^{(\e)}_\Psi|(y)\,dxdyds\cr
& \leq C\|\rho^{(\e)} -\rho\|_{L^p(\R^3 \times (0,T))} \|\rho^{(\e)}_\Psi\|_{L^{p'}(0,T;L^q(\R^3))},
\end{align*}
where $q$ is given by $\frac1q = \frac43 - \frac1p > 0$.
Then by \eqref{st_conv}, we have $\mbox{(II)}^\e \to 0$ as $\e \to 0$.\\

$\bullet$ Estimate of (III)$^\e$: Similarly as before, we observe
\[
(\nabla K^\e \star \rho) \Psi \in L^1(0,T;L^\frac{15}{14}(\R^3))
\]
uniformly in $\e$. Thus, due to the weak convergence of $f^{(\e)}$, we get $\mbox{(III)}^\e \to 0$ as $\e \to 0$. \newline

Therefore, we conclude that $f$ is a weak solution to \eqref{eq:VR}. 
\end{proof}

\subsection*{Acknowledgements}
YPC has been supported by NRF grant (No. 2017R1C1B2012918) and Yonsei University Research Fund of 2021-22-0301. IJJ has been supported by the New Faculty Startup Fund from Seoul National University and the National Research Foundation of Korea grant (No. 2019R1F1A1058486).

\subsection*{Conflict of Interest}
We state that there is no conflict of interest.

\end{document}